\pgfplotsset{compat=newest}
\newtheorem{definition}{Definition}
\newtheorem{theorem}{Theorem}
\newtheorem{example}{Example}
\renewcommand\thmcontinues[1]{Continued}
\newtheorem{remark}{Remark}
\newtheorem{proposition}{Proposition}
\title{\LARGE \bf Data-Driven Analysis of Mass-Action Kinetics}
\author{Camilo Garcia-Tenorio, Duvan Tellez-Castro\thanks{Camilo Garcia-Tenorio and Duvan Tellez-Castro are supported by Colciencias-Doctorado Nacional-647/2015 \& 727/2016}, Eduardo Mojica-Nava, and Jorge Sofrony\thanks{Departamento de Ingenier\'ia, Universidad Nacional de Colombia, Bogot\'a, Colombia 111321, \texttt{\{cagarciate, datellezc, eamojican, jsofronye\}@unal.edu.co}}}
\begin{document}
\selectlanguage{english}
\maketitle
\begin{abstract}
The physical interconnection of spatial distributed biochemical systems has some advantages when dealing with large-scale problems that require separated agents to be controlled locally but with an overall objective. The analysis and control of such systems becomes a difficult task, because of the nonlinear nature of the dynamics of the individual subsystems, and the added complexity due to the interconnection. Therefore, analysis tools from a data-driven perspective are employed in contrast with the analytical classical way that becomes intractable once the subsystems grow in size or complexity.
\end{abstract} 
\begin{keywords}
	Attraction region, Eigenvalue analysis, Koopman operator, Mass action kinetics, Stoichiometric networks.
\end{keywords}

\section{Introduction} 
	\label{sec:introduction}

	The problem of wastewater treatment is a current issue in the path of sustainable intelligent cities. Modern cities can plan their growth and development centered on these perspectives, but older cities whose development was not sustainable centered cannot accommodate the large areas a centralized wastewater treatment plant occupies. Along this trend, city expansion could be focused on decentralized wastewater treatment in favor of better use of space and the avoidance of large facilities \cite{Larsen2016}. There are some challenges to be addressed for the decentralized management of water. One of them is in the dimensionality of the problem due to interconnection; the whole system has to guarantee the collaboration of subsystems to get a handle on an adequate effluent. On the other hand, the whole interconnected system in the wastewater treatment allows for its resilience in terms of collaboration between individual subsystems when one or several are operating under nonideal conditions.

	Biochemical processes for the wastewater treatment process such as the activated sludge treatment can be modeled by {\it mass action kinetics} (MAK). This representation, based on the stoichiometric network of biochemical processes give a mathematically rich polynomial structure based on concentrations \cite{Chellaboina2009a}. There are plenty of analysis tools to handle on such structure \cite{Haddad2010,Bernstein1999}. The problem arises when there is uncertainty in the reaction constants \cite{Cukier1973}, when there is uncertatinty, the general behavior of the system cannot be analyzed quantitavely in an exact manner, only qualitative characteristics are present. Also, there are problems  when an interconnection component is present on the system, that is a system of systems. This interconnection component induces a rise in the number of variables and can induce complex behavior. When this happens, classical tools have to be complemented with theories handling on large-scale systems and their behavior in order to analize the overall behavior and performance of the system. 

	The objective of this paper is to show how classical techniques such as Lyapunov, or passivity (Energy-based) analysis can be coupled with operator theory (Koopman operator) in order to have a measure of the individual and overall behavior and performance. There are many tools for the analysis of polynomial or algebraic systems \cite{Haddad2008,Haddad2010,Khalil2015}. The problem with these is that they can not get a handle on complexity; i.e., when the dimensionality or the order of the system grows, specifically in the case of complexity for polynomial systems \cite{Ahmadi2011,Majumdar2014,Anderson2010,giesl2015review}.
 	Traditional methods for the non-existence of periodic orbits such as Bendixon criteria or comparison functions need much effort for its implementation in high dimensions \cite{budivsic2012applied,skar1981nonexistence}. Another problem is that usually with the classic techniques (e.g., Lyapunov, input-output stability) the way to develop a global stability analysis, is checking if there are other points of equilibria or limit cycles \cite{cuesta1999stability}. But even the first step of finding all the equilibria points for the analysis of nonlinear systems is a hard problem. Therefore,  complex dynamic systems require tools for their analysis that transcend beyond the current knowledge in control theory \cite{dimirovski2016complex}. The escalating behavior in the number of equilibria and the characteristic of the equilibria do not allow for a simple and straightforward approach. Still, the overall performance has to be guaranteed under interconnection. Therefore, the approach proposed in this paper is to use operator based algorithms coupled with classical analysis tools. This allows us to have a better measure of the performance of the system; the characterization of attraction regions for this particular case when we only have data snapshots of the system and a general or qualitative picture of the process. Although the analysis tools are intended for the interconnection of systems, the proposed approach is exposed for the analysis of a single system.   

	We show that based on the level sets of the main eigenfunction of the Koopman operator we can extract information concerning the stable manifold of saddle points, and therefore, we can characterize the stability boundary of the attraction region of asymptotically stable equilibrium points.

	This paper is organized as follows. In Section~\ref{sec:LocDyn}, we present the analysis of a system based on MAK where the problem of complexity is presented by the order of the differential equation and the existence of multiple equilibria. In Section \ref{sec:ergodic_systems}, we show the current methods for finding the Koopman operator based on extended dynamic mode decomposition, an useful theorem for the characterization of attraction regions, and the way we use the operator to find these regions. In Section \ref{sec:application_example} we present our results for a particular problem within the MAK modeling strategy. A discussion which describes implications and advantages  of the analysis is presented in Section \ref{sec:analysis}. Finally, in Section \ref{sec:conclusions} we present some conclusions. 

	\textbf{Notation.} $\mathbb{C}$ denotes the field of complex numbers. $\mathbb{R}$ and $\mathbb{R}_+$ denote the field of real and nonnegative real numbers, respectively. For any vector $x \in \mathbb{R}^n$, $x^{\star}$ denotes  complex conjugate transpose, and $||x||$ represents the Euclidian norm. The space $n\times{}n$ of real matrices is designes by $R^{n\times{}n}$. For a complex number $\lambda$, $|\lambda|$ represents its norm. For any set $A$, $\bar{A}$ denotes the  closure of $A$.   Finally, the operator $\odot$ is defined as the product term to term. The vector exponentiation $M^{\pm{\eta}}$ is defined term by term.

\section{Local Dynamics}
	\label{sec:LocDyn}
	The distributed approach for treating the wastewater and flood control problem provides that the subsystems be represented in a suitable form. Biochemical dynamical systems can be represented by mass action kinetics (MAK)\cite{Chellaboina2009a,Feinberg1979c,Feinberg1979b,Feinberg1979a,Feinberg1979}, defined by autonomous nonlinear dynamical systems of the form
	\begin{equation}
		\dot{x}^{t}=f(x^{t},u^{t}),\,\,x(0)=x_0,\,\,t\in \mathbb{R}_{+},
		\label{eq:NormalDynSys}
	\end{equation}
	where $x^{t}\in\mathbb{R}^{n}$ is the state of the system, $u^{t}\in\mathbb{R}^{p}$ is the input or control signal of the system, and $f:\mathbb{R}^{n}\times\mathbb{R}^{p}\rightarrow\mathbb{R}^{n}$ is continuous in $x$ and $u$. Dynamical systems under the MAK structure produce differential equations in a polynomial form, as they represent concentrations, they cannot be negative, i.e., they live in the first orthant of the Euclidean space. 
	Starting from a set of species $\mathcal{S}=[s_1, s_2,\cdots, s_n]^{\top}\in\mathbb{R}^{n}$ with their respective concentrations $X=[x_1, x_2,\cdots, x_n]^{\top}\in{}\mathbb{R}^{n}$ such that $\sum_{j=1}^{n}x_j=1$, a set of reactions $\kappa{}=[k_1, k_2,\cdots, k_m]^{\top}\in{}\mathbb{R}^{m}$, and a stoichiometric network given by 
	\begin{equation}
		\sum_{j=1}^n\alpha_{i,j}x_j\overset{k_i}{\rightarrow{}}\sum_{j=1}^{n}\beta_{i,j}x_j.
		\label{eq:GenStoi}
	\end{equation}

	In matrix form, the reactants $\alpha_{i,j}$ and the products $\beta_{i,j}$ can be represented as $A=[\alpha_{i,j}]$ and $B=[\beta_{i,j}]$, $A,B\in\mathbb{R}^{m\times{}n}$ respectively. With this at hand, the differential equation governing the stoichiometric network \eqref{eq:GenStoi} is
	\begin{equation}
	  	\dot{x}=[B-A]^{\top}(\kappa{}\odot{}x^{A}),
		\label{eq:MAK}
	\end{equation}
	where the matrix exponentiation gives a vector $y^{A}\in\mathbb{R}^m$ such that $y_j=\prod_{k=1}^{m}x_{k}^{\alpha_{j,k}}$. Through the whole paper we are going to analize the next example based on a stoichiometric network.
	\begin{example}[label=ex1]
		Consider the simple network of an auto-catalytic replicator \cite{Gray1984} on a continuous flow stirred tank reactor (CFSTR) described by the network 
		\begin{equation}
\begin{tikzpicture}[->,>=stealth']
  \matrix (m) [matrix of math nodes, row sep=.3cm, column sep=.75cm, column 1/.style={nodes={align=right}}, column 2/.style={nodes={align=center}},column 3/.style={nodes={align=left}}]{$s_1$+$2s_2$&&$3s_2$\\$s_2$&&$s_3$\\$s_1$&$s_0$&$s_2$\\&&\\&$s_3$,&\\};
  \path
  ([yshift=.5mm]m-1-1.east) edge [above] node[yshift=-.9mm] {\scriptsize$k_{1}$} ([yshift=.5mm]m-1-3.west) 
  ([yshift=.5mm]m-2-1.east) edge [above] node[yshift=-.9mm] {\scriptsize$k_{2}$} ([yshift=.5mm]m-2-3.west) 
  ([yshift=.5mm]m-3-1.east) edge [above] node[yshift=-.9mm] {\scriptsize$g$} ([yshift=.5mm]m-3-2.west) 
  ([yshift=-.5mm]m-3-2.west) edge [below] node[yshift=.9mm] {\scriptsize$gx_{A}^{in}$} ([yshift=-.5mm]m-3-1.east) 
  ([yshift=-.5mm]m-3-2.east) edge [below] node[yshift=.9mm] {\scriptsize$gx_{B}^{in}$} ([yshift=-.5mm]m-3-3.west) 
  ([yshift=.5mm]m-3-3.west) edge [above] node[yshift=-.9mm] {\scriptsize$g$} ([yshift=.5mm]m-3-2.east) 
  ([xshift=-0.5mm]m-5-2.north) edge [left] node[xshift=.9mm,yshift=-.5mm] {\scriptsize$g$} ([xshift=-0.5mm]m-3-2.south) 
  ([xshift=0.5mm]m-3-2.south) edge [right] node[xshift=-.9mm] {\scriptsize$gx_{C}^{in}$} ([xshift=0.5mm]m-5-2.north); 0->D
\end{tikzpicture}

			\label{eq:AutocatR}
		\end{equation}
		where $s_1$ is the resource the species $s_2$ consumes to replicate, $s_3$ is the death species in the tank, $s_0$ is the environment, from which input reactants or species come from and go to at the input and output. $k_1>k_2$ are the replicator rate constant and the species death constant respectively. $g$ is the in/out-flow of the system, and $x^{in}_s\in\mathbb{R}^{m}$ are the concentration of reactants and product in the input feed. The differential equation for this system arises from:
		\begin{align}
			 A&=\begin{bmatrix}1 &2 &0 &0\\
		       0 &1 &0 &0\\
		       1 &0& 0 &0\\
		       0 &1 &0 &0\\
		       0 &0 &1 &0\\
		       0 &0 &0 &1\end{bmatrix};&B&=\begin{bmatrix}0 &3 &0 &0\\
		      0 &0 &1 &0\\
		      0 &0 &0 &1\\
		      0 &0& 0 &1\\
		      0 &0 &0 &1\\
		      1 &0 &0 &0\end{bmatrix};&\kappa&=\begin{bmatrix}k_1\\k_2\\g\\g\\g\\g\end{bmatrix},
		\end{align}
		where the effect of the input concentrations has been neglected because $x^{in}_{s}=[1\,0\,0]^{\top}$, which as a result gives the set of differential equations
		\begin{align}
			\dot{x}_1&=-k_{1}x_{1}{x_{2}}^2+g-gx_{1}\\ 
			\dot{x}_2&=+k_{1}x_{1}{x_{2}}^2-k_{2}x_{2}-gx_{2}\\ 
			\dot{x}_3&=+k_{2}\,x_{2}-gx_{3},
			\label{eq:SimpleDynamics}	
		\end{align}
		where the line concerning the dynamics of the $s_0$ species, which represents the exchange of material with the environment was discarded because it has no dynamics (i.e., $\dot{x}_0=0$). Note that any of the equations for the state variables can be discarded as it is a linear combination of the other two, then, the system can be treated as a two state variable problem (e.g., $x_1$ and $x_2$).
	\end{example}
	\begin{definition}
		 A vector $x^{*}\in{}\mathbb{R}^{m}$ satisfying $[B-A]^{\top}(\kappa{}\odot{}x^{A})=0$ is an equilibrium of \eqref{eq:MAK}.
	\end{definition}
	\begin{example}[continues=ex1]
		\begin{align} 
			x^{*}&=\left\{\vphantom{\frac{a}{a}}(x_1,x_2,x_3)\in{}\mathbb{R}^{3}_{+}:\begin{bmatrix}1&0&0\end{bmatrix}\right.,\\
			&\begin{bmatrix}1-\frac{\gamma+\sqrt{g}\sqrt{k_{1}}}{2\sqrt{g}\sqrt{k_{1}}}&\frac{\sqrt{g}\left(\gamma+\sqrt{g}\sqrt{k_{1}}\right)}{2\sqrt{k_{1}}\left(g+k_{2}\right)}&\frac{k_{2}\left(\gamma+\sqrt{g}\sqrt{k_{1}}\right)}{2\sqrt{g}\sqrt{k_{1}}\left(g+k_{2}\right)}\end{bmatrix},\label{eq:Equilibria}\\
			&\left.\begin{bmatrix}1+\frac{\gamma-\sqrt{g}\sqrt{k_{1}}}{2\sqrt{g}\sqrt{k_{1}}}&-\frac{\sqrt{g}\left(\gamma+\sqrt{g}\sqrt{k_{1}}\right)}{2\sqrt{k_{1}}\left(g+k_{2}\right)}&-\frac{k_{2}\left(\gamma+\sqrt{g}\sqrt{k_{1}}\right)}{2\sqrt{g}\sqrt{k_{1}}\left(g+k_{2}\right)}\end{bmatrix}\right\},
		\end{align}
		where $\gamma$ is defined as
		\begin{equation}
			\gamma(k_1,k_2,g)=\sqrt{-4g^2-8gk_{2}+k_{1}g-4{k_{2}}^2},
		\end{equation}
		and the equilibria is only defined for the interval in which $\gamma$ is real.
	\end{example}
	An analysis of the nature of the equilibria for the simple system based on classical techniques would be very difficult, the nature of the equilibria for every equilibrium depends on the values of $k_1$, $k_2$, and $g$.
	\begin{example}[continues=ex1]
		For the first point in \eqref{eq:Equilibria} a simple choice of Lyapunov function $V(x)$, see \cite{Khalil2015} for details on Lyapunov based stability,
		\begin{equation}
			V(x)=-\sum_{i=1}^{m}x^{*}_{i}\text{ln}\left(\frac{x_i}{x^{*}_{i}}\right),
		\end{equation}
		where $V(x^{*})=0$, and due to Jensen's inequality $\mathbb{E}f(x)\geq{}f(\mathbb{E}x)$,
		\begin{align}
			V(x)&=-\sum_{i=1}^{m}x^{*}_{i}\text{ln}\left(\frac{x_i}{x^{*}_{i}}\right)\\
				&\geq{}-\text{ln}\left(\sum_{i=1}^{m}x^{*}_{i}\frac{x_i}{x^{*}_{1}}\right)\\
				&=0.
		\end{align}
	Now, for the derivatives of $V(x)$ along the trajectories of $x$, and due to the fact that $x\in[0,1]$ we have
	\begin{align}
		\dot{V}(x)&=\begin{bmatrix}\frac{x^{*}_1}{x_{1}}&\frac{x^{*}_2}{x_{2}}&\frac{x^{*}_3}{x_{3}}\end{bmatrix}\dot{x}\\
				  &=-k_1x_2^2+g-\frac{g}{x_1}\\
				  &\leq{}0.
	\end{align}

	Note that $\dot{V}(x)\neq{}0$ for all $x\neq{}x_e$ and therefore the first equilibrium point is asymptotically stable, and, as $V(x)$ is not radially unbounded, the first equilibrium is not globally asymptotically stable. As $x_3$ can be expressed as a linear combination of $x_1$ and $x_2$, there is no uncertainty induced by the absence of $x_3$ in the derivative along the trajectories of the Lyapunov function.

	For the second and third equilibria in \eqref{eq:Equilibria}, the dynamic behavior is more complex, as they have a dependency on the parameters, and the second equilibrium produces a Hopf bifurcation \cite{Uppal1974,Uppal1976} that changes the nature of the equilibrium. The dynamic behavior of the equilibria with respect to different values of the in/out-flow constant $g$, and some fixed values of the reaction constants $k_1=10$ and $k_2=0.1$ is presented in Fig.~\ref{fig:EqDyn}.
	\begin{figure}[ht]
		\centering
		\includegraphics[width=0.45\textwidth]{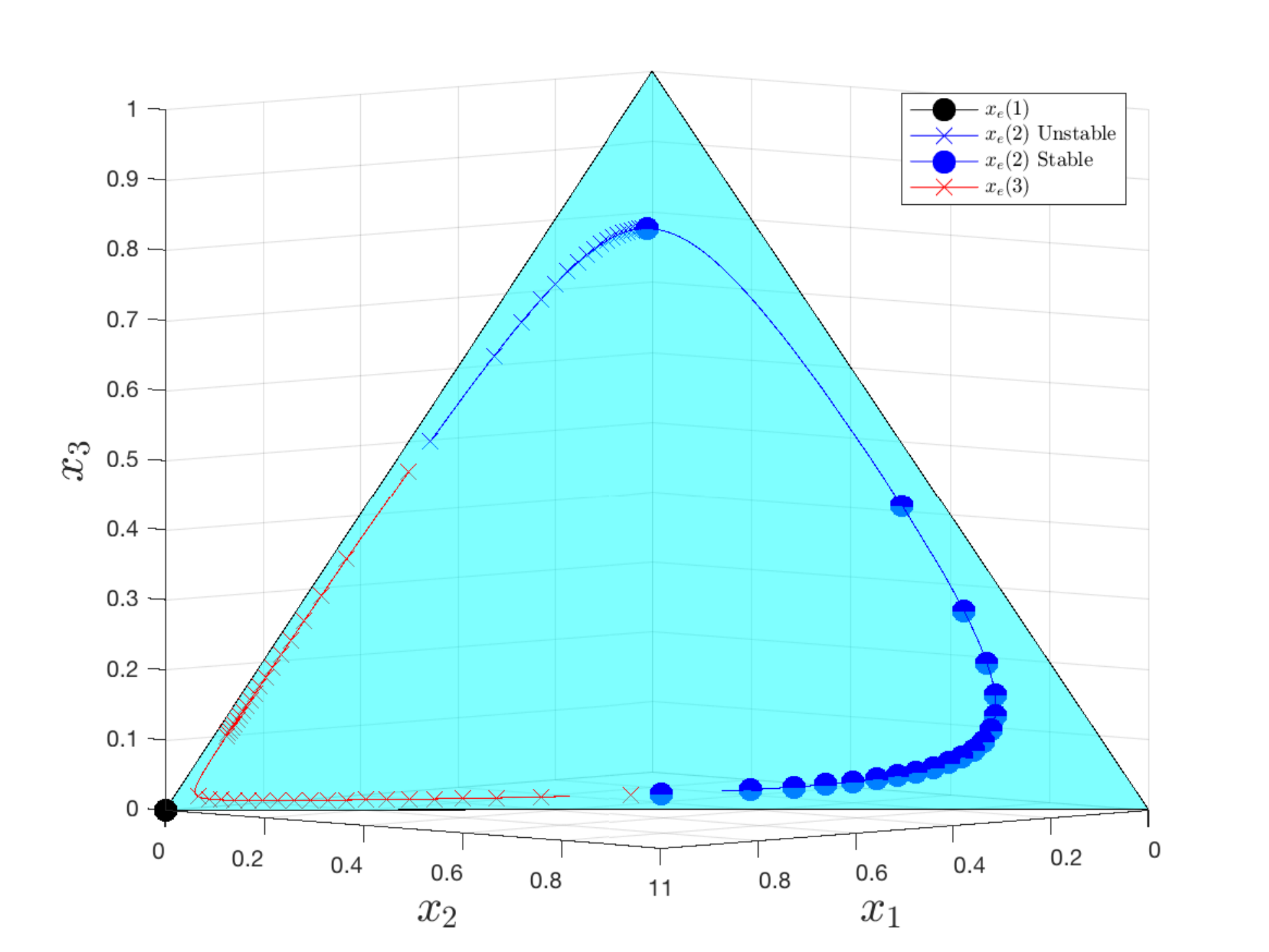}
		\caption{Equilibria, and its nature with respect to in/out-flow parameter $g$.}
		\label{fig:EqDyn}
	\end{figure}

	Although this process gives important information about of the local behavior of the different equilibria, a better analysis would consider regions of attraction of these equilibria for control and optimization of the process.
	\end{example} 
	Now, consider that the system belongs to a network of interacting subsystems that are set to collaborate into achieving a common goal while being controlled locally for their individual performance, such a subsystems interconnection would yield a structure such as
	\begin{equation}
		\dot{x}_i=f_{i}(x_i(t),u_i(t))+g(x(t)),\,\{i\}_{1}^{N_s},
		\label{eq:InterDynSys}
	\end{equation}
	where $f_i(x_i(t),u_i(t)):\mathbb{R}^{n_i}\times\mathbb{R}^{p_i}\rightarrow\mathbb{R}^{n_i}$ represents the local dynamics, and $g(x):\mathbb{R}^{\sum_{i}n_i}\rightarrow\mathbb{R}^{n_i}$ represents the effect due to interconnection, these for each of the $N_s$ subsystems. When considering interconnection the dimension of the dynamical system grows. Depending on how large is this growth,  on the order of the individual systems, the number of individual systems and the interconnection structure, the simple task of finding the equilibria may not be possible analytically, and numerical methods may not give solutions within acceptable computing times \cite{Shmakov2011}.

	The proposed approach combines two techniques for the solution of the equilibria analysis of the interconnected system via numerical methods. These present two advantages with respect to the classical approach. The first is in terms of dimensionality, and the second is in terms of dependence on the model. The operator based technique, such as the Koopman operator, relies on a data set of snapshot pairs of the system, e.g., the full state observable of the state space with a specific sampling time \cite{Williams2016,Williams2015}. This approach allows us to complement algorithmic techniques such as \cite{chiang2015stability}, to characterize nonlinear dynamical systems. These techniques depend on the model and the need of backward integration to better characterize the system. In the next section the \textit{extended dynamic mode decomposition} (EDMD) and the \textit{Stability Regions of Nonlinear Dynamical Systems} (SR) are going to be explained as well as the way in which these two can be combined and complemented.  


\section{The Koopman Operator and Attraction Regions} 
	\label{sec:ergodic_systems}
	The approach for the analysis of the dynamical characteristics of the systems, as a single entity and in interconnection is based on EDMD and SR. The following algorithm for EDMD is from \cite{Williams2015}, and references therein where the theory behind the algorithm is presented.
	\subsection{Extended Dynamic Mode Decomposition} 
		\label{sub:EDMD}
		Consider the discrete time dynamical system $(\mathcal{M}_{d};\eta;F_{d}^{\eta})$, where $\mathcal{M}_{d}\subseteq{}\mathbb{R}^N$ is the state space, $\eta\in\mathbb{Z}$ is the discrete time, anf $F_{d}:\mathcal{M}_{d}\rightarrow\mathcal{M}_{d}$ is the evolution operator. The Koopman $\mathcal{K}$ operator acts on functions of the state space $\psi\in\mathcal{F_{d}}$, where $\psi:\mathcal{M}:\rightarrow\mathbb{C}$, i.e., defines the evolution of the observable $\psi$ governed by the linear operator $\mathcal{K}$, defining a new dynamical system $(\mathcal{F};n;\mathcal{K})$. For the case of autonomous continuous time dynamical systems such as \eqref{eq:NormalDynSys}, there is an associated continuous flow map $F_{c}^{t}(x):\mathcal{M}\rightarrow\mathcal{M}$ (i.e., the solution of the differential equation for $t\in\bar{\mathbb{R}}_{+}$) that defines the continuous time dynamical system $(\mathcal{M};F_{c}^{t})$. For this dynamical system, we can define the Koopman operator $\mathcal{K}_{\Delta{}t}$, associated with the flow of the system in a fixed time interval $\Delta{}t$, defining again a new dynamical system $(\mathcal{F};\eta;\mathcal{K}_{\Delta{}t}^{\eta})$, i.e., a dynamical system defined by its discrete time evolution operator $\mathcal{K}_{\Delta{}t}$. 

		The Koopman operator based algorithm requires $P$ pairs of data snapshots, either from a real system or a computationally integrated one at a specific sampling $\Delta{}t$. The snapshot pairs, $\{(x_p,y_p)\}_{p=1}^{P}$ are organized in data sets
		\begin{equation}
			X=\begin{bmatrix}x_1&x_2&\ldots{}&x_P\end{bmatrix},\,\,\,Y=\begin{bmatrix}y_1&y_2&\ldots{}&y_P\end{bmatrix},
		\end{equation}
		where $x_i,y_i\in{}\mathcal{M}$, the space state of the system, and a dictionary of observables $\mathcal{D}=\{\psi_1,\ldots{}\psi_{N_k}\}$, where $\psi_{i}\in\mathcal{F}$ are the observables that define the vector valued function $\Psi{}:\mathcal{M}\rightarrow\mathbb{C}^{1\times{}N_k}$ where
		\begin{equation}
			\Psi{}(x)=\begin{bmatrix}\psi_1(x)&\psi_2(x)&\ldots{}&\psi_{N_k}(x)\end{bmatrix}.
		\end{equation}
		
		The goal now is to generate the finite-dimensional approximation of the infinite linear operator $\mathcal{K}$, defined as $K\in\mathbb{C}^{N_k\times{}N_k}$ and calculated 
		\begin{equation}
			K\triangleq{}G_k^{+}A_k,
		\end{equation}
		where $+$ denotes pseudoinverse, and matrices $G_k,A_k\in\mathbb{C}^{N_k\times{}N_k}$ are defined by
		\begin{align}
			G_k=&\frac{1}{P}\sum_{p=1}^{P}\Psi(x_p)^{\star}\Psi(x_p),\\
			A_k=&\frac{1}{P}\sum_{p=1}^{P}\Psi(x_p)^{\star}\Psi(y_p),
		\end{align}
		where $\star$ denotes complex conjugate. With the $K$ matrix, its eigenvalues $M_k=\text{diag}([\mu_1,\ldots,\mu_{N_k}])$, and right eigenvectors $\Xi=[\xi_1,\ldots,\xi_{N_k}]$ can be calculated. Koopman eigenfunctions can be defined as
		\begin{equation}
			\Phi=\Psi(x)\Xi,
		\end{equation}
		with $\Xi$ as the matrix of right eigenvectors, its inverse is the matrix of left eigenvectors, $\Xi^{-1}=W_k^{\star}$. Now, defining the full-state observable, $g_k(x)=x$ such that $g_k(x):\mathcal{M}\rightarrow\mathbb{R}^N$, where $g_k(x)\in\mathcal{F_\mathcal{D}}\subseteq\mathcal{F}$, i.e., the full state observable lies in the space spanned by $\mathcal{D}$, which is a subspace of $\mathcal{F}$. With this result, the full state observable can be expressed as
		\begin{equation}
		 	g_k(x)=(\Psi(x)B_k)^{\top},
		\end{equation} 
		 where $B_k\in\mathbb{R}^{N\times{}N_K}$ is an appropriate matrix of weights that relate the leading Koopman eigenfunctions to each of the full state observables values for them to accurately capture the state. The literature lacks a proper way of characterizing these vectors that compose the matrix $B_k$. With the appropriate weights for each of the leading eigenfunctions, the full state observable can be calculated as
		 \begin{equation}
		 	g_k(x)=V_k\Phi(x)^{\top},\qquad{}V_k=(W_k^{\star}B_k)^{\top},
		 \end{equation}
		 where the $i$th column of $V_k$ is the $i$th Koopman mode, leaving us with an appropriate definition of the full state observable in terms of Koopman eigenfunctions and modes of the Koopman operator $K$. This definition serves to define the evolution of the states in terms of the Koopman characterization $\{(\mu_k,\phi_k,v_k)\}_{k=1}^{N_k}$ consisting of $N_k$ Koopman eigenvalues, eigenfunctions, and modes. Therefore, the $\eta^{\text{th}}$ application of the state evolution $F^{\eta}(x)=x_{\eta}$ from the initial condition $x_{0}$ is
		 \begin{equation}
		 	x(\eta)=F^{\eta}(x_{0})=K^{\eta}g_k(x_{0})=(M_k^{\eta}\odot{}V_k)\Phi(x_{0})^{\top}.
		 	\label{eq:ForwardFlow}
		 \end{equation}
		 
		 Note that the Koopman representation of the system also allows for a clean representation of the backwards evolution of the states given by
		 \begin{equation}
		 	x(-\eta)=\left(M_k^{-\eta}\odot{}V_k\right)\Phi(x)^{\top}.
		 	\label{eq:BackwardFlow}
		 \end{equation}
		 \begin{remark}
		 	The Koopman eigenfunctions $\Phi(x)$ contain information about the equilibria of the system in terms of where they are, and the functions reveal the stable and unstable manifold of saddle point equilibria.	  With the deduction of the Koopman eigenfunctions and the evolution of the system, the stability boundary of asymptotically stable equilibria can be characterized in the following way.
		 \end{remark}
		
	\subsection{Stability Boundary} 
		 \label{sub:stability_boundary}
		 The theory for stability boundary for systems that contain saddle points that characterize the attraction region of an asymptotically stable equilibrium point was developed by \cite{chiang2015stability}. Equilibria is divided between asymptotically stable equilibria $x_{s}$, and saddle/hyperbolic equilibria $\hat{x}$. For each of them we can define the attraction region of the asymptotically stable equilibria defined as $A_{s}(x_{s})\triangleq\{x\in\mathbb{R}^n:\lim_{t\rightarrow{\infty}}F_{c}^{t}(x)=x_{s}\}$, the boundary of the attraction region as $\partial{}A_{s}(x_{s})=\overline{A}_{s}(x_{s})\cap{}(\overline{\mathbb{R}^{n}\backslash{}A_{s}(x_{s}))}$, the stable manifold of the saddle point $W^{s}(\hat{x})=\{x\in\mathbb{R}^{n}:F_{c}^{t}(x)\rightarrow{}\hat{x}\text{ as }t\rightarrow{}\infty{}\}$, and the unstable manifold of the saddle point $W^{u}(\hat{x})=\{x\in\mathbb{R}^{n}:F_{c}^{t}(x)\rightarrow{}\hat{x}\text{ as }t\rightarrow{}-\infty\}$ (assuming there is an inverse for $F_c^{t}$ for the backward flow of the system). With these definitions, we can  present the main theorem for the characterization of attraction regions {\cite[Th.~4-8]{chiang2015stability}}.
		 \begin{theorem}
		 	For a nonlinear autonomous dynamical systems \eqref{eq:NormalDynSys} which satisfies the following assumptions.
		 	\begin{enumerate}
		 		\item All equilibria on the stability boundary are saddle points.
		 		\item The stable and unstable manifolds of the equilibria on the stability boundary satisfy the transversality condition.
		 		\item Every trajectory on th stability boundary approaches one of the saddle point equilibria as $t\rightarrow\infty$.
		 	\end{enumerate}
		 	Let $\{\hat{x}_i\}_{i\in{}\mathbb{N}}$ be the saddle equilibria on the stability boundary $\partial{}A_s(x_{s})$ of an asymptotically stable equilibrium point $x_{s}$. Then
		 	\begin{enumerate}[(a)]
		 		\item $\hat{x}_{i}\in\partial{}A_s(x_{s})$ if and only if $W^{u}(\hat{x}_i)\cap{}A_s(x_{s})\neq\varnothing$
		 		\item $\partial{}A_s(x_{s})=\cup{}W^{s}(\hat{x}_i)$.
		 	\end{enumerate}
		 \end{theorem}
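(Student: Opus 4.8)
The plan is to treat the flow $F_c^t$ as a one-parameter group of homeomorphisms (the backward flow exists by hypothesis) and to argue almost entirely through invariance and the local structure of hyperbolic equilibria. First I would record the preliminaries. The region $A_s(x_s)$ is open by continuous dependence on initial data together with asymptotic stability of $x_s$, and it is fully invariant: forward invariance is immediate from $\lim_{t\to\infty}F_c^t(x)=x_s$, and backward invariance follows since $F_c^t(F_c^{-\tau}(x))=F_c^{t-\tau}(x)$ still tends to $x_s$. Because each $F_c^t$ is a homeomorphism, $\overline{A_s}$ and hence $\partial A_s(x_s)=\overline{A_s}\setminus A_s$ are invariant, with $\partial A_s(x_s)$ closed; and every saddle $\hat x_i\neq x_s$ is an equilibrium, so $\hat x_i\notin A_s(x_s)$.

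For part (a) the converse direction is clean. Given $p\in W^u(\hat x_i)\cap A_s(x_s)$, backward invariance yields $F_c^{-t}(p)\in A_s(x_s)$ for all $t\ge 0$, while $p\in W^u(\hat x_i)$ gives $F_c^{-t}(p)\to\hat x_i$; hence $\hat x_i\in\overline{A_s}$, and since $\hat x_i\notin A_s(x_s)$ we get $\hat x_i\in\partial A_s(x_s)$. The forward direction is the crux. Assuming $\hat x_i\in\partial A_s(x_s)$, I would pick $q_n\in A_s(x_s)$ with $q_n\to\hat x_i$, and since Assumption~1 forces $\dim W^u(\hat x_i)\ge 1$ so that $W^s(\hat x_i)$ has codimension at least one, openness of $A_s(x_s)$ lets me take $q_n\notin W^s(\hat x_i)$. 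By Hartman--Grobman near the hyperbolic point together with the inclination ($\lambda$-) lemma, the forward orbits of the $q_n$, which remain in $A_s(x_s)$ by invariance, leave a fixed neighborhood of $\hat x_i$ along points converging to $W^u_{\mathrm{loc}}(\hat x_i)$. The transversality condition (Assumption~2) of $W^u$ to the boundary is then used to upgrade this accumulation into the open set, exhibiting an actual point of $W^u(\hat x_i)$ inside $A_s(x_s)$.

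For part (b) I would split into two containments. For $\bigcup_i W^s(\hat x_i)\subseteq\partial A_s(x_s)$, note that each $\hat x_i\in\partial A_s(x_s)$ by (a); a time-reversed inclination-lemma argument (symmetric to the one above, since backward time exchanges the roles of $W^s$ and $W^u$) shows points of $A_s(x_s)$ accumulate on $W^s_{\mathrm{loc}}(\hat x_i)$, so $W^s_{\mathrm{loc}}(\hat x_i)\subseteq\overline{A_s}$; as these points flow to $\hat x_i\neq x_s$ they avoid $A_s(x_s)$, giving $W^s_{\mathrm{loc}}(\hat x_i)\subseteq\partial A_s(x_s)$, and invariance of $\partial A_s(x_s)$ propagates this to $W^s(\hat x_i)=\bigcup_{t\ge 0}F_c^{-t}(W^s_{\mathrm{loc}}(\hat x_i))$. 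For the reverse containment $\partial A_s(x_s)\subseteq\bigcup_i W^s(\hat x_i)$, take $y\in\partial A_s(x_s)$; invariance keeps its entire forward orbit in the closed set $\partial A_s(x_s)$, and Assumption~3 forces that orbit to converge to some saddle $\hat x_i$, i.e. $y\in W^s(\hat x_i)$, with $\hat x_i\in\partial A_s(x_s)$ by (a).

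I expect the \textbf{main obstacle} to be the forward direction of (a): converting the statement ``$W^u(\hat x_i)$ accumulates on $\overline{A_s}$'' into ``$W^u(\hat x_i)$ meets the open set $A_s(x_s)$''. Because $W^u(\hat x_i)$ is generically of dimension strictly between $1$ and $n-1$, closure membership does not by itself place a point in the interior, and it is exactly here that the full force of the local hyperbolic theory (Hartman--Grobman plus the inclination lemma) and the transversality hypothesis are required. Everything else reduces to soft invariance arguments and the convergence guaranteed by Assumption~3.
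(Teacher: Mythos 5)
The first thing to note is that the paper contains no proof of this statement at all: the theorem is quoted, with citation, as Theorem 4--8 of the reference \cite{chiang2015stability} (Chiang and Alberto's book), so your attempt can only be measured against the standard Chiang--Hirsch--Wu argument that the citation points to. Measured that way, most of your sketch matches the standard proof: openness and full invariance of $A_s(x_s)$, invariance and closedness of $\partial A_s(x_s)$, the $(\Leftarrow)$ half of (a), and both inclusions of (b) are exactly the soft arguments in the literature (in particular (b)$\subseteq$, where Assumption~3 plus invariance of the closed set $\partial A_s(x_s)$ does all the work, and (b)$\supseteq$, via the backward-time $\lambda$-lemma and invariance).

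The genuine gap is where you yourself flagged it, in (a)$(\Rightarrow)$, and it is not merely a missing technicality: the step you describe is not how the argument can go. Assumption~2 is transversality of $W^u(\hat{x}_i)$ with respect to $W^s(\hat{x}_j)$ for pairs of saddles on the boundary --- not ``transversality of $W^u$ to the boundary,'' which is not even a meaningful hypothesis since $\partial A_s(x_s)$ need not be a manifold. The $\lambda$-lemma applied to small disks $D\subset A_s(x_s)$ transverse to $W^s(\hat{x}_i)$ gives you $W^u_{\mathrm{loc}}(\hat{x}_i)\subseteq \overline{A}_s(x_s)$, but $\overline{A}_s(x_s)=A_s(x_s)\cup\partial A_s(x_s)$, and nothing so far excludes the bad case $W^u(\hat{x}_i)\setminus\{\hat{x}_i\}\subseteq\partial A_s(x_s)$. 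The actual proof disposes of that case by a heteroclinic-chain argument in which Assumption~3 is used again (not only in part (b)): if $W^u(\hat{x}_i)\cap A_s(x_s)=\varnothing$, pick $p\in W^u(\hat{x}_i)\cap\partial A_s(x_s)$; Assumption~3 and invariance of the boundary force $F_c^t(p)\rightarrow\hat{x}_j$ for some saddle $\hat{x}_j$ on the boundary, so $W^u(\hat{x}_i)\cap W^s(\hat{x}_j)\neq\varnothing$, and by Assumption~2 this intersection is transversal. Since the intersection contains a whole orbit (dimension at least one), transversality forces $\dim W^u(\hat{x}_j)\leq \dim W^u(\hat{x}_i)-1$, while the $\lambda$-lemma gives $\overline{W^u(\hat{x}_i)}\supseteq W^u(\hat{x}_j)$. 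Iterating, the unstable dimension strictly decreases, so after finitely many steps one reaches a saddle whose unstable manifold does meet the open set $A_s(x_s)$, and the chain of closure inclusions propagates this back to $W^u(\hat{x}_i)\cap A_s(x_s)\neq\varnothing$, the desired contradiction. This dimension-descent step is precisely the content that Assumptions~2 and~3 jointly deliver in part (a); without it your sketch cannot close.
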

		 Next, we are going to to show how the eigenfunctions from EDMD can be used to characterize the attraction region of an asymptotically stable equilibrium point.
	\subsection{Koopman Operator for Attraction Regions} 
		\label{sub:koopan_for_attraction_regions}
		The characterization of the attraction region of an asymptotically stable equilibrium point requires that the stable manifolds of saddle points on the boundary of attraction be characterized. For this purpose, we propose the use of some important results coming from the operator representation that EDMD gives. First, we are going through some description and assumptions of the kind of systems being evaluated.

		The systems under evaluation are under the MAK modeling strategy, these have a nice polynomial structure coming from the stoichiometric network of the system. Although we know the polynomial structure arising from MAK, we assume that the reaction constants for the process are not known, but that they are constant nonetheless, i.e., they are independent of the reactor temperature, either because the system is adiabatic or, from the assumption that a proper controller has been setup for this purpose. This gives us some notion of the characteristics of the equilibria for the system, but not the whole picture. 

		We also assume that there is information concerning several trajectories of the system, i.e., the system has been tested from several initial conditions such that a set of snapshots can be used to characterize the whole dynamic spectra, which is possible due to the fact that MAK lie in the unitary manifold of the first orthant. These snapshots correspond to a full measurement of the state at a specified sampling time. Now, the procedure is to identify the attraction regions, characterized by the stability boundary, defined by the stable manifold of saddle points in the boundary. 

		Consider the autonomous nonlinear system \eqref{eq:NormalDynSys} whose dynamics come from \eqref{eq:MAK}, based on a stoichiometric network such as \eqref{eq:AutocatR}. Recall from the EDMD procedure that the forward and backward flow of the system can be achieved in terms of the eigenvalues, eigenfunctions, and Koopman modes, \eqref{eq:ForwardFlow} and \eqref{eq:BackwardFlow}. Therefore if any of these trajectories is invariant with respect to the flow map or evolution operator of the system $F^{\eta}(x_0)$, then the system is in a fixed point.
		\begin{definition}[Fixed Point]
			\label{def:FixedPoint}
			Let $x^{*}$ be a fixed point of the dynamical system $(\mathcal{M};F_c^{t}(x))$, then the flow map $F_{c}^{t}(x^{*})$ maps into itself. i.e.,
			\begin{equation}
				F_{c}^{t}(x^{*})=x^{*}.
			\end{equation}
		\end{definition}

		As it is the case, we propose minimization of the squared norm of the difference between $x$ and the flow map of the system over the state space in order to find the equilibria.
		\begin{proposition}[Fixed points] 
			Let $(\mathcal{M};F_c^{t}(x))$ be a dynamical system that accepts an operator representation $(\mathcal{F};\eta;\mathcal{K}_{\Delta{}t}^{\eta})$ based on EDMD, then its evolution operator is defined by \eqref{eq:ForwardFlow}, and its equilibria can be found by solving
			\begin{equation}
				x^{*}=\min_{x}\left\Vert(M_k^{\eta}\odot{}V_k)\Phi(x)^{\top}-x\right\Vert,
			\end{equation}
			where all the local minima represent a fixed point of the system.
		\end{proposition}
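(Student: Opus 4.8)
The plan is to recognize the objective as a non-negative residual that measures the failure of $x$ to be invariant under the flow, and to show that it vanishes exactly on the equilibrium set. First I would name the cost $J(x)=\left\Vert(M_k^{\eta}\odot{}V_k)\Phi(x)^{\top}-x\right\Vert$ and record that, being the norm of a vector, $J(x)\geq{}0$ for every $x\in\mathcal{M}$, with $J(x)=0$ if and only if the argument of the norm is the zero vector. Next I would invoke the EDMD evolution law \eqref{eq:ForwardFlow}, which identifies $(M_k^{\eta}\odot{}V_k)\Phi(x)^{\top}$ with the $\eta$-step forward map $F^{\eta}(x)$; hence $J(x)=\left\Vert F^{\eta}(x)-x\right\Vert$ is precisely the distance between a state and its image one Koopman step later.

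With this identification, the equivalence follows from Definition~\ref{def:FixedPoint}. A point $x^{*}$ is a fixed point when $F_{c}^{t}(x^{*})=x^{*}$, which in the sampled representation reads $F^{\eta}(x^{*})=x^{*}$, i.e. $(M_k^{\eta}\odot{}V_k)\Phi(x^{*})^{\top}-x^{*}=0$, equivalently $J(x^{*})=0$. Combining this with the lower bound $J\geq{}0$ shows that the equilibria coincide with the zero-level set of $J$, which is exactly its set of global minimizers; each such point is therefore a minimum at which the cost attains the value zero, and conversely every zero-valued minimizer is an equilibrium. Solving the minimization thus returns the equilibria, and I would note in passing that \eqref{eq:BackwardFlow} lets one evaluate the same residual from data alone, without the reaction constants.

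The hard part will be justifying that \emph{all} the located local minima are fixed points rather than spurious stationary points. Because $\Phi$ is nonlinear, $J$ need not be convex and could in principle possess local minima at which $J>0$; these are not equilibria, so a rigorous statement really concerns the zero-level minimizers, the only ones forced to exist by the bound $J\geq{}0$ together with the presence of equilibria. I would close this gap by restricting attention to minimizers attaining the value zero and arguing that, under the MAK structure and the data sampled along trajectories accumulating near the equilibria, the optimization is seeded inside their basins; a cleaner resolution would impose that the residual $F^{\eta}(x)-x$ has no stationary points away from the equilibrium set. A secondary subtlety is that $F^{\eta}(x^{*})=x^{*}$ is also satisfied by periodic points of period dividing $\eta\Delta{}t$, so one must either choose $\eta$ to avoid such resonances or appeal to the absence of periodic orbits in the region of interest, ensuring that the zeros of $J$ are genuine equilibria of the continuous flow.
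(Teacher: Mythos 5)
Your proposal is correct and takes essentially the same route as the paper, whose entire proof is the single sentence ``This is a consequence of Def.~\ref{def:FixedPoint}'': identify the objective with $\left\Vert F^{\eta}(x)-x\right\Vert$ via \eqref{eq:ForwardFlow} and invoke the fixed-point definition, so that equilibria are exactly the zero-level minimizers. Your closing caveats---that non-convexity of $\Phi$ permits spurious local minima with positive residual, and that periodic points of period dividing $\eta\Delta{}t$ also annihilate the residual---identify genuine weaknesses in the proposition as stated (``all the local minima represent a fixed point'') which the paper's one-line proof silently ignores, so your treatment is strictly more careful than, but not in conflict with, the paper's.
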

		\begin{proof}
			This is a consequence of Def~\ref{def:FixedPoint}.
		\end{proof}
		\begin{remark}
			Note that this procedure is possible under the MAK paradigm as the system is defined on the unitary manifold of the first orthant, and therefore a complete exploration of the space state from several initial conditions is possible to find all the equilibria of the system.
		\end{remark}
		\begin{remark}
			This proposition is only accountable for the location of the equilibria in the space state, it does not give information about its stability characteristic.
		\end{remark}
		With the location of the equilibria, the known structure of the dynamical system based on the stoichiometric network comes into place in order to know the proper way to analyze the system. There are techniques that consider the reversibility of the stoichiometric network, and the deficiency of the system in order to draw conclusions about the number of equilibria and their behavior \cite{Feinberg1979c,Feinberg1979b,Feinberg1979a,Feinberg1979,Feinberg1980,Feinberg1987,Chellaboina2009a,Haddad2010}. We are focusing our efforts for the case where a saddle equilibrium can characterize the attraction region of an asymptotically stable equilibrium point via the stable manifold of a saddle equilibrium, therefore, we assume that by linearization, or by using the structure of the system and its properties, the saddle point is identifiable. With this information, the next proposition is the main result of this work, and is the fact that the stable manifold of a saddle equilibrium has a level set on the main eigenfunction describing the system.
		\begin{proposition}
			\label{prop:Manifold}
			Let $(\mathcal{M};F_c^{t}(x))$ be a dynamical system that accepts an operator representation $(\mathcal{F};n\Delta{}t;\mathcal{K}_{\Delta{}t})$ based on EDMD, then its main eigenfunction $\phi_{+}(x)$ corresponding to the eigenvalue with the largest norm is constant along the trajectory of the stable manifold of the saddle equilibria. Therefore, the stable manifold is defined by the level set 
			\begin{equation}
				L_{W^{s}(\hat{x})}(\phi_{+}(x))=\{(x_1,\dots,x_n):\phi_{+}(x_1\dots,x_n)=\phi_{+}(\hat{x})\}
			\end{equation}
		\end{proposition}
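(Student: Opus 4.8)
The plan is to use the defining intertwining relation of a Koopman eigenfunction, namely $(\mathcal{K}_{\Delta t}\phi_+)(x)=\phi_+(F_c^{\Delta t}(x))=\mu_+\,\phi_+(x)$, and to propagate it along the flow. Iterating this relation along the orbit through $x$ yields the exact identity
\begin{equation}
  \phi_+\big(F_c^{\eta\Delta t}(x)\big)=\mu_+^{\eta}\,\phi_+(x),\qquad \eta\in\mathbb{N},
\end{equation}
where $\mu_+$ is the eigenvalue of largest norm and $\phi_+$ its eigenfunction; since each Koopman eigenfunction is a finite linear combination of the dictionary observables (recall $\Phi=\Psi(x)\Xi$), $\phi_+$ is continuous on $\mathcal{M}$. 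I would first record that $\phi_+$ vanishes at every fixed point: evaluating the relation at $\hat{x}$ gives $\phi_+(\hat{x})=\mu_+\phi_+(\hat{x})$, and because the largest-norm (most unstable) mode satisfies $|\mu_+|>1$, so $\mu_+\neq 1$, this forces $\phi_+(\hat{x})=0$.

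The core of the argument is then a boundedness-versus-growth dichotomy on the stable manifold. Fix $x\in W^{s}(\hat{x})$, so that $F_c^{\eta\Delta t}(x)\to\hat{x}$ as $\eta\to\infty$. By continuity of $\phi_+$ the left-hand side $\phi_+(F_c^{\eta\Delta t}(x))$ converges to $\phi_+(\hat{x})=0$ and is in particular bounded in $\eta$. On the other hand the right-hand side has modulus $|\mu_+|^{\eta}|\phi_+(x)|$ with $|\mu_+|>1$, which stays bounded as $\eta\to\infty$ only if $\phi_+(x)=0$. Hence $\phi_+(x)=0=\phi_+(\hat{x})$, and moreover $\phi_+(F_c^{\eta\Delta t}(x))=\mu_+^{\eta}\cdot 0=0$ for every $\eta$, so $\phi_+$ is indeed constant (equal to $\phi_+(\hat{x})$) along the entire stable-manifold trajectory. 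This establishes the inclusion $W^{s}(\hat{x})\subseteq L_{W^{s}(\hat{x})}(\phi_+(x))$, which is the substance of the stated level-set characterization. To upgrade the inclusion to the full identity I would work locally near $\hat{x}$: by Hartman--Grobman the flow is conjugate to its linearization, whose most unstable coordinate is precisely the principal eigenfunction attached to $\mu_+$; the zero set of that coordinate is tangent to the stable eigenspace and coincides with $W^{s}(\hat{x})$ in a neighborhood, matching the codimension of the level set $\{\phi_+=\phi_+(\hat{x})\}$.

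The main obstacle is twofold. First, one must certify that the eigenvalue of largest norm delivered by EDMD is genuinely the unstable mode of the saddle, with $|\mu_+|>1$, and that the corresponding eigenfunction is the principal eigenfunction associated with $\hat{x}$ rather than a product of lower modes; this requires the snapshot data to sample a neighborhood of the saddle and of its unstable manifold adequately---available here because, $\hat{x}$ lying on the stability boundary, $W^{u}(\hat{x})\cap A_s(x_s)\neq\varnothing$ by part (a) of the SR theorem---together with a dictionary $\mathcal{D}$ rich enough to resolve the local spectrum. Second, because EDMD produces only the finite-dimensional approximation $K$, the intertwining identity holds exactly only on the span of $\mathcal{D}$; I would either invoke the standing assumption of Section~\ref{sub:EDMD} that the full-state observable lies in that span, or carry an approximation-error term and argue that it is negligible on the forward-invariant region. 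By contrast, once $|\mu_+|>1$ and $\phi_+(\hat{x})=0$ are in hand, the boundedness/growth step and the conclusion are routine.
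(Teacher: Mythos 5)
Your proof is correct and rests on exactly the fact the paper itself invokes: the eigenvalue (intertwining) relation $\mathcal{K}_{\Delta t}\phi_{+}=\mu_{+}\phi_{+}$, iterated along the flow and combined with linearity. The paper's own proof is a one-line appeal to $K^{t}\phi_{i}=\lambda_{i}\phi_{i}$ with citations, so your argument --- vanishing of $\phi_{+}$ at the saddle since $\mu_{+}\neq 1$, the growth-versus-boundedness dichotomy along $W^{s}(\hat{x})$, and the explicit flagging of the $|\mu_{+}|>1$ and dictionary-invariance caveats that the paper leaves implicit --- is the same approach, simply carried out in full rather than asserted.
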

		\begin{proof}
			This is a concequence from the fact that $K^{t}\phi_{i}=\lambda_{i}\phi_{i}$, and the linearity of the operator \cite{Koopman1931,Mauroy2016}.
		\end{proof}

\section{Application Example} 
	\label{sec:application_example}
	Consider the application example of, the auto-catalytic simple system \eqref{eq:AutocatR}, assume that the structure is known, and that we are working in the region where the in/out-flow $g$ induces a stable focus on the third equilibrium point.
	\begin{example}[continues=ex1]
		We run the system 100 times with a length of 21 points per run with a $\Delta{}t=0.125$, that gives us a total of 2,000 snapshots. With that information, we apply EDMD from Section \ref{sub:EDMD} with a choice of dictionary composed by 25 entries of Hermite Polynomials. The system under study is the reduced dynamics due to the dependence of $x_3$ on the other two, that is, \eqref{eq:SimpleDynamics} without the last equation. 
		The result is an EDMD based Koopman operator that effectively represents the dynamics of the system with 25 eigenfunctions $\phi(x)$ that characterize the dynamical structure of the system. The first and second eigenfunctions have the associated eigenvalue with the largest norm, and their evaluation on the unitary manifold that contains the dynamics can be seen in Fig.~\ref{fig:SurfMani}, where the level set, obtained by evaluating the main eigenfunction as in Proposition~\ref{prop:Manifold} is also shown. 		
		\begin{figure}[ht]
		 	\centering
		 	\includegraphics[width=0.5\textwidth]{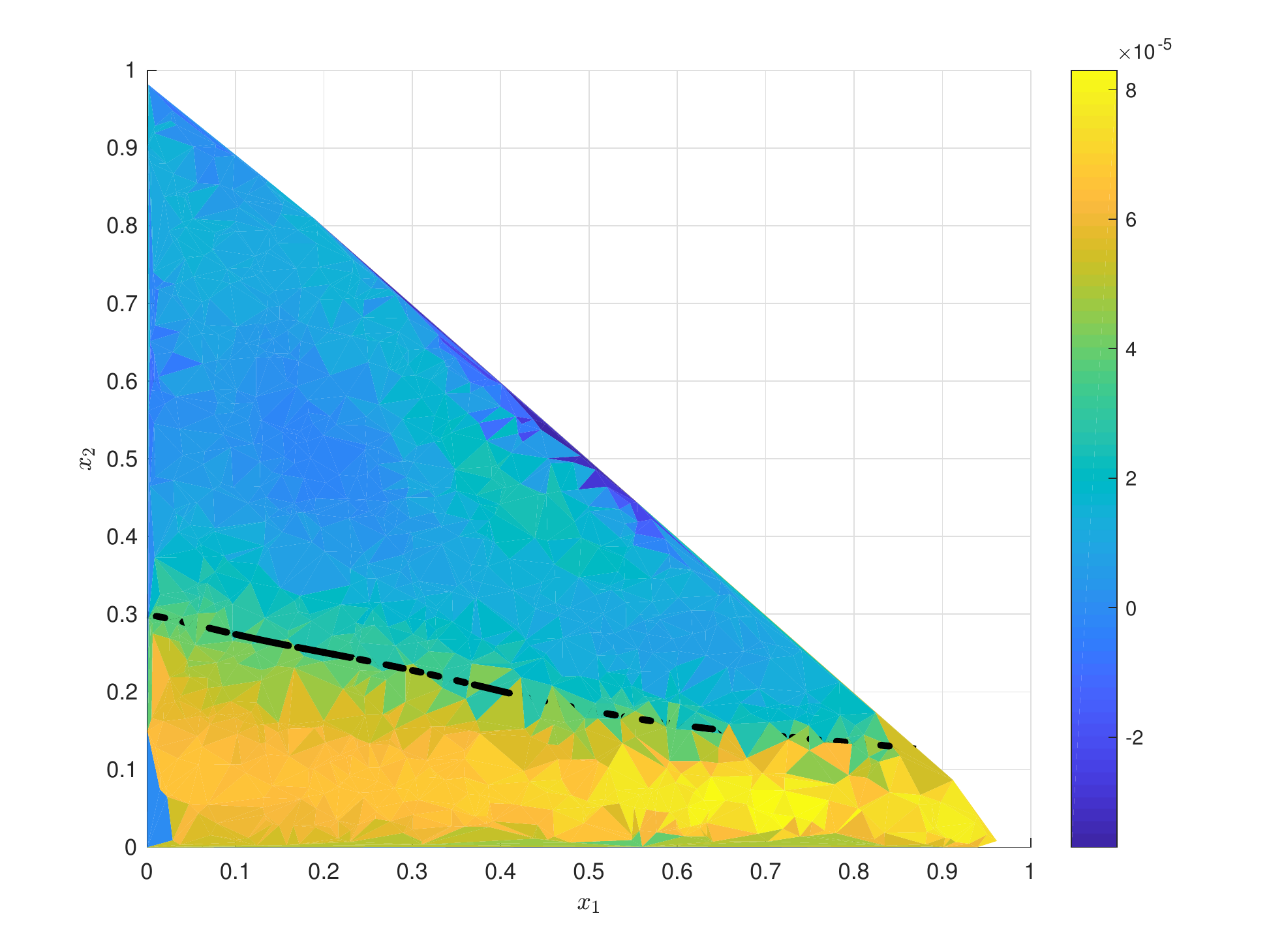}
		 	\caption{$\phi_{+}(x)$ and the level set $L_{W^{s}(\hat{x})}(\phi_{+}(x))$ that characterizes the stable manifold of the saddle point.}
		 	\label{fig:SurfMani}
		\end{figure} 
		The result of this approach is presented in Fig.~\ref{fig:DuffEqEvol}, where two data sets of 70 initial conditions per region are generated according to their location with respect to the level set that characterizes the attraction region of the asymptotically stable equilibria. In solid circles are the initial conditions that satisfy
		\begin{align}
		 	IC_{\bullet}=&\{(x_1,x_2)\in\mathbb{R}^{2}:\,x_1+x_2\leq1\}\cup\\
		 					&\{(x_1,x_2)\in\mathbb{R}^{2}:(x_1,x_2)\geq{}L_{W^{s}(\hat{x})}(\phi_{+}(x))\},
		\end{align} 
		and in solid diamonds the initial conditions that satisfy
		\begin{align}
			IC_{\blacklozenge}=&\{(x_1,x_2)\in\mathbb{R}^{2}:\,x_1+x_2\leq1\}\cup\\
		 					&\{(x_1,x_2)\in\mathbb{R}^{2}:(x_1,x_2)<L_{W^{s}(\hat{x})}(\phi_{+}(x))\}.
		\end{align}
		
		Taking trajectories from these distinct initial conditions gives the result as to which of the equilibria they converge, where it can be seen that there is some error due to the numeric nature of the algorithm, despite the error, a measure of the minimal distance between the focus equilibrium and the level set gives a measure on robustness of that equilibria with respect to any possible perturbations, i.e., $0.5$ for the case under consideration.
		\begin{figure}[ht]
		 	\centering
		 	\includegraphics[width=0.5\textwidth]{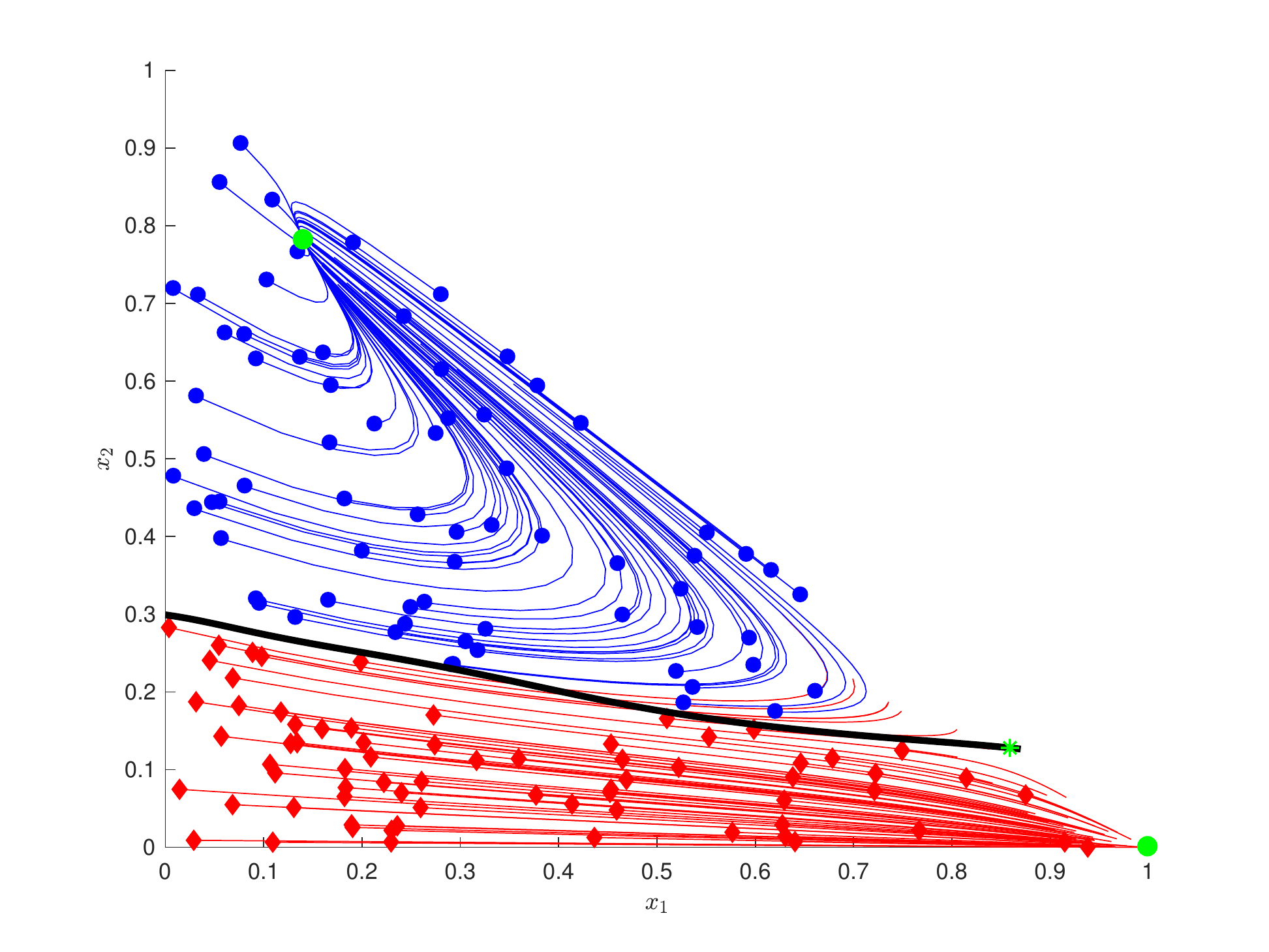}
		 	\caption{Evolution of trajectories with initial conditions based on the stable manifold of the saddle criteria.}
		 	\label{fig:DuffEqEvol}
		\end{figure} 
	\end{example}
\section{Discussion} 
	\label{sec:analysis}
	Our proposed approach relies on the proper analysis of eigenfunctions. This paper only covers the analysis of the level sets of eigenfunctions. This analysis lets us characterize the stable manifold of saddle points to obtain the attraction region of an asymptotically stable equilibrium. This is among the tools to extract information out of the eigenfunctions \cite{Mauroy2013,Mauroy2016,Mezic2013} to properly analyze a dynamical system. These numeric tools represent a base for the future analysis of complex systems in order to synthesize controllers for polynomial dynamical systems (e.g., MAK). Although there are other ways to analyze systems in terms of their attraction regions, we only need the structure of the system in order to perform the analysis. A qualitative representation is enough.   

	Even if we had the tools for extracting all kinds of information regarding the system, there are limitations with respect to the deduction of the eigenfunctions. The algorithms are not clear on the appropriate number of samples or the distribution of initial conditions in order to have an accurate representation of the system. There is no clear criteria to select a dictionary and its size. This produces inaccuracies in the eigenfunction, and as can be seen in Fig.~\ref{fig:DuffEqEvol}, some trajectories with initial conditions in the estimated region for the trivial equilibrium converge to the stable focus. 

	A possible improvement would be with respect to the choice of the main eigenfunction of the operator. All eigenfunctions capture different information and properties (relevant or not, accurate or not). A linear combination of the different level sets that show a similar behavior could yield better results. Unfortunately, the shortcoming stated earlier and the fact that the choice of dictionary affects the information that a particular eigenfunction captures, there is no proper way of selecting the main eigenfunctions.

	These data-driven techniques allow us to work over broader spectra of the state space; when dealing with systems where the exact dynamics are unknown, in presence of uncertainties in the system parameters or in the presence of perturbation that can get the system out of its operation point or even destabilize it. Therefore, these data-driven approach can complement classical control synthesis techniques to get a handle on optimality and robustness.

	To improve on the analysis, we need to explore the polynomial structure of MAK and find the relationship with the polynomials of the dictionary. This will allow us to construct dictionaries which will provide eigenfunctions with more information regarding the characterization of the system. This way, reduced order dictionaries could be employed to have a better handle on complexity.
\section{Conclusions} 
	\label{sec:conclusions}
	In conclusion, eigenfunctions coming from operator based algorithms provide information for the system analysis; in this particular case, the equilibria of the system and the attraction regions of asymptotically stable equilibria. The problem relies on the proper construction of eigenfunctions and the extraction of the information they provide.

	The accuracy of the estimation of the stability boundary depends on the number of available snapshots, the kind of dictionary used to obtain the Koopman operator matrix, and the size of the dictionary.

	We can conclude that the operator based analysis of dynamical systems is far from complete; we need a stronger theoretical framework in order to analyze systems with inputs, interconnection and feedback controllers. 

\bibliographystyle{ieeetr}
\bibliography{library.bib}
\end{document}